\newtheorem{theorem}{Theorem}
\newtheorem*{G}{Theorem G}
\newtheorem*{M}{Theorem M}
\newtheorem*{GT1}{Theorem GG}
\newtheorem*{GT2}{Theorem GT}
\newtheorem*{kon}{Theorem K}
\begin{document}
\title[]{  logarithmic means of Walsh-Fourier Series}
\author{ Ushangi Goginava }
\address{U. Goginava, Department of Mathematics, Faculty of Exact and
Natural Sciences, Tbilisi State University, Chavcha\-vadze str. 1, Tbilisi
0179, Georgia}
\email{zazagoginava@gmail.com}
\thanks{.}

\begin{abstract}
In this paper we discuss some convergence and divergence properties of
subsequences of logarithmic means of Walsh-Fourier series . We give
necessary and sufficient conditions for the convergence regarding
logarithmic variation of numbers.
\end{abstract}

\maketitle

\bigskip \footnotetext{%
2010 Mathematics Subject Classification. 42C10.
\par
Key words and phrases: Walsh-Fourier series, Nörlund Logarithmic means,
almost everywhere converges, convergence in norm.}

\section{Walsh Functions}

We shall denote the set of all non-negative integers by $\mathbb{N}$, the
set of all integers by$\,\,\mathbb{Z}$ and the set of dyadic rational
numbers in the unit interval $\mathbb{I}:=[0,1)$ by $\mathbb{Q}$. In
particular, each element of $\mathbb{Q}$ has the form $\frac{p}{2^{n}}$ for
some $p,n\in \mathbb{N},\,\,\,0\leq p\leq 2^{n}$.

Denote the dyadic expension of $n\in \mathbb{N}$ and $x\in \mathbb{I}$ by%
\begin{equation*}
n=\sum\limits_{j=0}^{\infty }\varepsilon _{j}\left( n\right)
2^{j},\varepsilon _{j}\left( n\right) =0,1
\end{equation*}%
and%
\begin{equation*}
x=\sum\limits_{j=0}^{\infty }\frac{x_{j}}{2^{j+1}},x_{j}=0,1.
\end{equation*}%
In the case of $x\in \mathbb{Q}$ chose the expension which terminates in
zeros. Define the dyadic addition $+$ as%
\begin{equation*}
x+y=\sum\limits_{k=0}^{\infty }\left\vert x_{k}-y_{k}\right\vert 2^{-(k+1)}.
\end{equation*}

The sets $I_{n}\left( x\right) :=\left\{ y\in \mathbb{I}%
:y_{0}=x_{0},...,y_{n-1}=x_{n-1}\right\} $ for $x\in \mathbb{I}%
,I_{n}:=I_{n}\left( 0\right) $ for $0<n\in \mathbb{N}$ and $I_{0}\left(
x\right) :=\mathbb{I}$ are the dyadic intervals of $\mathbb{I}$. For $0<n\in 
\mathbb{N}$ denote by $\left\vert n\right\vert :=\max \left\{ j\in \mathbb{N}%
:n_{j}\neq 0\right\} ,$ that is, $2^{\left\vert n\right\vert }\leq
n<2^{\left\vert n\right\vert +1}.$

The Rademacher system is defined by 
\begin{equation*}
\rho _{n}\left( x\right) :=\left( -1\right) ^{x_{n}}\text{ \ \ }\left( x\in 
\mathbb{I},n\in \mathbb{N}\right) .
\end{equation*}

The Walsh-Paley system is defined as the sequence of the Walsh-Paley
functions:%
\begin{equation*}
w_{n}\left( x\right) :=\prod\limits_{k=0}^{\infty }\left( \rho _{k}\left(
x\right) \right) ^{n_{k}}=\left( -1\right) ^{\sum\limits_{k=0}^{\left\vert
n\right\vert }n_{k}x_{k}},\left( x\in \mathbb{I},n\in \mathbb{N}\right) .
\end{equation*}

The Walsh-Dirichlet kernel is defined by 
\begin{equation*}
D_{n}\left( x\right) =\sum\limits_{k=0}^{n-1}w_{k}\left( x\right) .
\end{equation*}

Recall that (see \cite{sws}) 
\begin{equation}
D_{2^{n}}\left( x\right) =\left\{ 
\begin{array}{c}
2^{n},\mbox{if }x\in I_{n}\left( 0\right)  \\ 
0,\,\,\,\mbox{if }x\in \mathbb{I}\backslash I_{n}\left( 0\right) 
\end{array}%
\right. .  \label{dir2}
\end{equation}

As usual, denote by $L_{1}\left( \mathbb{I}\right) $ the set of measurable
functions defined on $\mathbb{I}$, for which%
\begin{equation*}
\left\Vert f\right\Vert _{1}:=\int\limits_{\mathbb{I}}\left\vert f\left(
t\right) \right\vert dt<\infty \text{.}
\end{equation*}%
Let\ $f\in L_{1}\left( \mathbb{I}\right) $. The partial sums of the
Walsh-Fourier series are defined as follows:

\begin{equation*}
S_{M}\left( x,f\right) :=\sum\limits_{i=0}^{M-1}\widehat{f}\left( i\right)
w_{i}\left( x\right) ,
\end{equation*}%
where the number 
\begin{equation*}
\widehat{f}\left( i\right) =\int\limits_{\mathbb{I}}f\left( t\right)
w_{i}\left( t\right) dt
\end{equation*}%
is said to be the $i$th Walsh-Fourier coefficient of the function\thinspace $%
f.$ Set $E_{n}\left( x,f\right) =S_{2^{n}}\left( x,f\right) .$The maximal
function is defined by 
\begin{equation*}
E^{\ast }\left( x,f\right) =\sup\limits_{n\in \mathbb{N}}E_{n}\left(
x,\left\vert f\right\vert \right) .
\end{equation*}

The notiation $a\lesssim b$ in the proofs stands for $a<c\cdot b$, where $c$
is an absolute constant.

\section{Logarithmic means}

In the literature, there is the notion of Riesz's logarithmic means of a
Fourier series. The $n$-th Riesz's logarithmic means of the Fourier series
of an integrable function $f$ is defined by 
\begin{equation*}
R_{n}\left( x,f\right) :=\frac{1}{l_{n}}\sum\limits_{k=1}^{n-1}\frac{%
S_{k}\left( x,f\right) }{k},
\end{equation*}%
where $l_{n}:=\sum_{k=1}^{n-1}\left( 1/k\right) $.

Riesz's logarithmic means with respect to the trigonometric system was
studied by a lot of authors. We mentioned, for instance, the paper by Szasz 
\cite{sz} and Yabuta \cite{ya}. This means with respect to the Walsh and
Vilenkin systems was discussed by Simon \cite{si}, Blahota, Gat \cite{BG},  G%
át \cite{gat}, Gát, Goginava \cite{GG}.

Let $\left\{ q_{k}:k\geq 0\right\} $ be a sequence of nonnegative numbers.
The $n$-th Nörlund means for the Fourier series of $f$ is defined by 
\begin{equation*}
\frac{1}{Q_{n}}\sum\limits_{k=0}^{n-1}q_{n-k}S_{k}f,
\end{equation*}%
where 
\begin{equation*}
Q_{n}:=\sum\limits_{k=1}^{n}q_{k}.
\end{equation*}%
If $q_{k}=k$, then we get the Nörlund logarithmic means 
\begin{equation*}
L_{n}\left( x,f\right) :=\frac{1}{l_{n}}\sum\limits_{k=1}^{n-1}\frac{%
S_{k}\left( x,f\right) }{n-k}.
\end{equation*}%
In this paper we call it logarithmic mean altough, it is a kind of
\textquotedblright reverse\textquotedblright\ Reisz's logarithmic mean.

It is easy to see that%
\begin{equation*}
L_{n}\left( x,f\right) =\int\limits_{\mathbb{I}}f\left( t\right) F_{n}\left(
x+t\right) dt,
\end{equation*}%
where by $F_{n}\left( t\right) $ we denote $n$th logarithmic kernel, i. e.%
\begin{equation*}
F_{n}\left( t\right) :=\frac{1}{l_{n}}\sum\limits_{k=1}^{n-1}\frac{%
D_{k}\left( t\right) }{n-k}.
\end{equation*}%
and Fej\'er kernel is defined by%
\begin{equation*}
K_{n}\left( t\right) :=\frac{1}{n}\sum\limits_{k=1}^{n}D_{k}\left( t\right) .
\end{equation*}

\section{$L_{1}$- estimation for logarithmic kernel}

For $n=\sum\limits_{j=0}^{\infty }\varepsilon _{j}\left( n\right)
2^{j},\varepsilon _{j}\left( n\right) =0,1$ we define 
\begin{equation*}
n\left( k\right) :=\sum\limits_{j=0}^{k}\varepsilon _{j}\left( n\right)
2^{j}.
\end{equation*}%
It is easy to see that $n\left( \left\vert n\right\vert \right) =n$. In this
paper for $L_{1}$-norm of logarithmic means we prove the following two sides
estimation.

\begin{theorem}
\label{est}Let $n\in \mathbb{N}$. Then%
\begin{equation*}
\left\Vert \frac{1}{l_{n}}\sum\limits_{j=1}^{n-1}\frac{D_{n-j}}{j}%
\right\Vert _{1}\sim \frac{1}{\left\vert n\right\vert }\sum\limits_{k=1}^{%
\left\vert n\right\vert }\left\vert \varepsilon _{k}\left( n\right)
-\varepsilon _{k+1}\left( n\right) \right\vert l_{n\left( k-1\right) }.
\end{equation*}
\end{theorem}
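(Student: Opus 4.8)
The plan is to reduce the $L_1$-norm of the logarithmic kernel $F_n = \frac{1}{l_n}\sum_{j=1}^{n-1}\frac{D_{n-j}}{j}$ to a sum of $L_1$-norms of simpler pieces indexed by the binary digits of $n$, and then estimate each piece. The starting point is to rewrite $F_n$ by reindexing $k = n-j$, so that $l_n F_n(t) = \sum_{k=1}^{n-1}\frac{D_k(t)}{n-k}$. The key structural idea is to split the summation range $[1,n-1]$ according to the dyadic blocks determined by $n(0) < n(1) < \dots < n(|n|) = n$. On each block $[n(k-1), n(k))$ (which is nonempty exactly when $\varepsilon_k(n)=1$) the ``weight'' $1/(n-k)$ is comparable to $1/(n - n(k-1)) \sim 1/2^{|n|}$ times a slowly varying factor, and one uses the Abel/partial-summation trick together with the identity $D_{2^m + r} = D_{2^m} + w_{2^m} D_r$ to peel off the high Rademacher factors. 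This is the standard machinery (as in Gát, Goginava, \cite{GG}, \cite{gat}) for handling Nörlund logarithmic kernels: the Dirichlet kernels $D_k$ for $k$ in a dyadic block decompose as a fixed $D_{2^{|n|}}$-type term plus a modulated copy of a lower-order logarithmic kernel $F_{n(k-1)}$-type object.

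Concretely, I would establish the decomposition
\[
l_n F_n(t) = \sum_{k=1}^{|n|}\varepsilon_k(n)\, w_{?}(t)\,\Bigl(\text{term involving } D_{2^k}\text{ and } F_{n(k-1)}(t)\Bigr) + (\text{error}),
\]
where the error terms are controlled in $L_1$ by an absolute constant times $\sum_k |\varepsilon_k(n) - \varepsilon_{k+1}(n)|\, l_{n(k-1)}$ as well — this telescoping/difference structure in the digits is exactly where the factor $|\varepsilon_k(n)-\varepsilon_{k+1}(n)|$ enters. Two consecutive equal digits produce cancellation between adjacent blocks (the modulating Walsh function matches up and the Dirichlet-kernel tails combine), while a digit change leaves an uncancelled boundary term whose $L_1$-norm is of order $l_{n(k-1)}$. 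For the upper bound one estimates each surviving piece using $\|D_{2^k}\|_1 = 1$, the known fact $\|F_m\|_1 \lesssim l_m / |m| \cdot (\text{something})$ applied recursively, and the normalization $l_n \sim |n|$; summing gives $\lesssim \frac{1}{|n|}\sum_k |\varepsilon_k(n)-\varepsilon_{k+1}(n)| \, l_{n(k-1)}$. For the lower bound one exploits the near-orthogonality of the surviving pieces (they live on disjoint dyadic intervals or carry distinct Walsh modulations), so that no cancellation can occur between them, and each contributes at least a constant multiple of $l_{n(k-1)}/|n|$ to the $L_1$-norm; a careful choice of test set (a union of dyadic intervals $I_k\setminus I_{k+1}$) makes this rigorous via the usual lower-bound-by-integration-on-a-good-set argument.

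The main obstacle I anticipate is the lower bound, specifically showing that the contributions of the individual digit-change terms do not cancel each other in $L_1$. The upper bound is essentially a bookkeeping exercise once the decomposition is in hand, but for the lower bound one must identify, for each $k$ with $\varepsilon_k(n)\neq\varepsilon_{k+1}(n)$, a region of $\mathbb{I}$ (a dyadic interval at scale roughly $2^{-k}$) on which the $k$-th term dominates all the others in absolute value and is not killed by the averaging $1/l_n$. This requires precise pointwise estimates on the logarithmic kernel restricted to dyadic intervals $I_m(x)\setminus I_{m+1}(x)$ — roughly that $F_{n(k-1)}(t) \sim l_{n(k-1)}/|n(k-1)|$ in magnitude on a suitable portion of such an interval with a predictable sign — so that the integrals add up rather than interfere. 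Handling the interplay between the scale of the interval, the order of the Rademacher modulation, and the index where the relevant digit change occurs is the delicate part; the digit-difference quantity $|\varepsilon_k(n)-\varepsilon_{k+1}(n)|$ is precisely the combinatorial gatekeeper that tells us which scales survive. Once the surviving terms are separated onto essentially disjoint supports, the lower bound follows by summing their individual $L_1$-contributions.
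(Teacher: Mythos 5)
Your proposal is an outline in the same general spirit as the paper's argument (peel off the binary digits of $n$ via $D_{2^{m}+r}=D_{2^{m}}+w_{2^{m}}D_{r}$, use Abel summation, isolate a main term built from the kernels $D_{2^{j}}$ weighted by $l_{n(j-1)}$, and get the lower bound by integrating over well-chosen dyadic sets), but as written it is not a proof: the decomposition is never actually produced (it contains placeholders such as $w_{?}$ and ``(error)''), and both bounds are described rather than carried out. Moreover, one structural claim is wrong. You assert that the error terms are controlled in $L_{1}$ by $\sum_{k}\left\vert \varepsilon _{k}(n)-\varepsilon _{k+1}(n)\right\vert l_{n(k-1)}$ through cancellation between adjacent blocks. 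In the actual argument the error terms ($H_{n}^{(2)}$, $H_{n}^{(3)}$ in the paper) are simply bounded by $c\left\vert n\right\vert$, using $\Vert D_{2^{j}}\Vert _{1}=1$, $\sup_{m}\Vert K_{m}\Vert _{1}<\infty$ and telescoping of $l_{n(j)}-l_{n(j-1)}$; they cannot in general be dominated by the digit-variation sum (take $n=2^{A}+1$: the variation sum is essentially $0$ while $\left\vert n\right\vert =A$). The factor $\left\vert \varepsilon _{j}(n)-\varepsilon _{j+1}(n)\right\vert$ does not come from cancellation in the error terms at all; it appears only in the main term, after a second Abel summation applied to the coefficients $\varepsilon _{j}(n)l_{n(j-1)}$ in $\sum_{j}\varepsilon _{j}(n)l_{n(j-1)}\left( D_{2^{j+1}}-D_{2^{j}}\right)$.

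The more serious gap is the lower bound, which you correctly identify as the crux but leave unresolved, and your proposed mechanism for it is not sound: the pieces $\left( D_{2^{j+1}}-D_{2^{j}}\right) l_{n(j-1)}$ are \emph{not} supported on disjoint sets nor separated by distinct Walsh modulations --- $D_{2^{j+1}}-D_{2^{j}}$ equals $2^{j}$ on $I_{j+1}$ and $-2^{j}$ on $I_{j}\setminus I_{j+1}$, so the supports are nested and genuine interference occurs. The paper's resolution is to organize the $1$-digits of $n$ into maximal blocks $a_{i}\leq j\leq b_{i}$ and to integrate over the intervals $A_{k}=\left( 2^{-a_{k}-1},2^{-a_{k}}\right)$ and $B_{k}=\left( 2^{-b_{k}-2},2^{-b_{k}-1}\right)$ attached to the block endpoints; on $A_{k}$ the contribution of all earlier blocks is at most $2^{b_{k-1}+1}l_{n(a_{k}-1)}$, i.e.\ at most half of the leading term $2^{a_{k}}l_{n(a_{k}-1)}$ by a geometric-series estimate, so each such interval yields at least $\tfrac{1}{4}l_{n(a_{k}-1)}$ (and similarly $\tfrac14 l_{n(b_k-1)}$ on $B_k$), and summing over the pairwise disjoint $A_{k},B_{k}$ gives exactly $\tfrac{1}{4}\sum_{k}\left\vert \varepsilon _{k}(n)-\varepsilon _{k+1}(n)\right\vert l_{n(k-1)}$. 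Without this (or an equivalent) quantitative domination argument at the block boundaries, your sketch does not establish the lower estimate, and hence not the two-sided equivalence.
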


\begin{proof}
We can write%
\begin{eqnarray}
&&\sum\limits_{j=1}^{n-1}\frac{D_{n-j}\left( t\right) }{j}  \label{1} \\
&=&\sum\limits_{j=1}^{n\left( \left\vert n\right\vert -1\right) -1}\frac{%
D_{n-j}\left( t\right) }{j}+\sum\limits_{j=n\left( \left\vert n\right\vert
-1\right) }^{n-1}\frac{D_{n-j}\left( t\right) }{j}  \notag \\
&=&\sum\limits_{j=1}^{n\left( \left\vert n\right\vert -1\right) -1}\frac{%
D_{\varepsilon _{\left\vert n\right\vert }\left( n\right) 2^{\left\vert
n\right\vert }+n\left( \left\vert n\right\vert -1\right) -j}\left( t\right) 
}{j}+\sum\limits_{j=n\left( \left\vert n\right\vert -1\right) }^{n-1}\frac{%
D_{n-j}\left( t\right) }{j}.  \notag
\end{eqnarray}%
Since%
\begin{equation*}
D_{\varepsilon _{\left\vert n\right\vert }\left( n\right) 2^{\left\vert
n\right\vert }+n\left( \left\vert n\right\vert -1\right) -j}\left( t\right)
=\varepsilon _{\left\vert n\right\vert }\left( n\right) D_{2^{\left\vert
n\right\vert }}\left( t\right) +\left( w_{2^{\left\vert n\right\vert
}}\left( t\right) \right) ^{\varepsilon _{\left\vert n\right\vert }\left(
n\right) }D_{n\left( \left\vert n\right\vert -1\right) -j}\left( t\right) ,
\end{equation*}%
from (\ref{1}) we have%
\begin{eqnarray*}
\sum\limits_{j=1}^{n-1}\frac{D_{n-j}\left( t\right) }{j} &=&\varepsilon
_{\left\vert n\right\vert }\left( n\right) D_{2^{\left\vert n\right\vert
}}\left( t\right) l_{n\left( \left\vert n\right\vert -1\right) } \\
&&+\left( w_{2^{\left\vert n\right\vert }}\left( t\right) \right)
^{\varepsilon _{\left\vert n\right\vert }\left( n\right)
}\sum\limits_{j=1}^{n\left( \left\vert n\right\vert -1\right) -1}\frac{%
D_{n\left( \left\vert n\right\vert -1\right) -j}\left( t\right) }{j} \\
&&+\varepsilon _{\left\vert n\right\vert }\left( n\right)
\sum\limits_{j=0}^{2^{\left\vert n\right\vert }-1}\frac{D_{2^{\left\vert
n\right\vert }-j}\left( t\right) }{j+n\left( \left\vert n\right\vert
-1\right) }.
\end{eqnarray*}

Iterating this equality we obtain%
\begin{eqnarray}
&&\sum\limits_{j=1}^{n-1}\frac{D_{n-j}\left( t\right) }{j}  \label{2} \\
&=&\left( \sum\limits_{j=2}^{\left\vert n\right\vert }\varepsilon _{j}\left(
n\right) D_{2^{j}}\left( t\right) l_{n\left( j-1\right) }\right)
\prod\limits_{k=j+1}^{\left\vert n\right\vert }\left( \rho _{k}\left(
t\right) \right) ^{\varepsilon _{k}\left( n\right) }  \notag \\
&&+\left( \sum\limits_{j=2}^{\left\vert n\right\vert }\varepsilon _{j}\left(
n\right) \sum\limits_{k=0}^{2^{j}-1}\frac{D_{2^{j}-k}\left( t\right) }{%
k+n\left( j-1\right) }\right) \prod\limits_{s=j+1}^{\left\vert n\right\vert
}\left( \rho _{s}\left( t\right) \right) ^{\varepsilon _{s}\left( n\right) }
\notag \\
&&+\left( \sum\limits_{j=1}^{n\left( 1\right) -1}\frac{D_{n\left( 1\right)
-j}\left( t\right) }{j}\right) \prod\limits_{k=2}^{\left\vert n\right\vert
}\left( \rho _{k}\left( t\right) \right) ^{\varepsilon _{k}\left( n\right) }.
\notag
\end{eqnarray}%
Since%
\begin{equation*}
\varepsilon _{j}\left( n\right) D_{2^{j}}\left( t\right)
\prod\limits_{k=0}^{j}\left( \rho _{k}\left( t\right) \right) ^{\varepsilon
_{k}\left( n\right) }=\varepsilon _{j}\left( n\right) D_{2^{j}}\left(
t\right) \rho _{j}\left( t\right)
\end{equation*}%
we have%
\begin{eqnarray}
&&\left( \sum\limits_{j=2}^{\left\vert n\right\vert }\varepsilon _{j}\left(
n\right) D_{2^{j}}\left( t\right) l_{n\left( j-1\right) }\right)
\prod\limits_{k=j+1}^{\left\vert n\right\vert }\left( \rho _{k}\left(
t\right) \right) ^{\varepsilon _{k}\left( n\right) }  \label{3} \\
&=&w_{n}\left( t\right) \left( \sum\limits_{j=2}^{\left\vert n\right\vert
}\varepsilon _{j}\left( n\right) D_{2^{j}}\left( t\right) l_{n\left(
j-1\right) }\right) \prod\limits_{k=0}^{j}\left( \rho _{k}\left( t\right)
\right) ^{\varepsilon _{k}\left( n\right) }  \notag \\
&=&w_{n}\left( t\right) \left( \sum\limits_{j=2}^{\left\vert n\right\vert
}\varepsilon _{j}\left( n\right) D_{2^{j}}\left( t\right) \rho _{j}\left(
t\right) l_{n\left( j-1\right) }\right) .  \notag
\end{eqnarray}

Combining (\ref{2}) and (\ref{3}) we conclude that%
\begin{eqnarray}
&&\sum\limits_{j=1}^{n-1}\frac{D_{n-j}\left( t\right) }{j}  \label{H1-H3} \\
&=&w_{n}\left( t\right) \left( \sum\limits_{j=2}^{\left\vert n\right\vert
}\varepsilon _{j}\left( n\right) D_{2^{j}}\left( t\right) \rho _{j}\left(
t\right) l_{n\left( j-1\right) }\right)  \notag \\
&&+\left( \sum\limits_{j=2}^{\left\vert n\right\vert }\varepsilon _{j}\left(
n\right) \sum\limits_{k=0}^{2^{j}-1}\frac{D_{2^{j}-k}\left( t\right) }{%
k+n\left( j-1\right) }\right) \prod\limits_{s=j+1}^{\left\vert n\right\vert
}\left( \rho _{s}\left( t\right) \right) ^{\varepsilon _{s}\left( n\right) }
\notag \\
&&+\left( \sum\limits_{j=1}^{n\left( 1\right) -1}\frac{D_{n\left( 1\right)
-j}\left( t\right) }{j}\right) \prod\limits_{k=2}^{\left\vert n\right\vert
}\left( \rho _{k}\left( t\right) \right) ^{\varepsilon _{k}\left( n\right) }
\notag \\
&=&:H_{n}^{\left( 1\right) }\left( t\right) +H_{n}^{\left( 2\right) }\left(
t\right) +H_{n}^{\left( 3\right) }\left( t\right) .  \notag
\end{eqnarray}

Since%
\begin{equation*}
D_{2^{j}-k}\left( t\right) =D_{2^{j}}\left( t\right) -w_{2^{j}-1}\left(
t\right) D_{k}\left( t\right) ,k=1,2,...,2^{j}-1
\end{equation*}%
for $H_{n}^{\left( 2\right) }\left( t\right) $ we can write%
\begin{eqnarray}
&&H_{n}^{\left( 2\right) }\left( t\right)  \label{H21-H23} \\
&=&\left( \sum\limits_{j=2}^{\left\vert n\right\vert }\varepsilon _{j}\left(
n\right) \frac{D_{2^{j}}\left( t\right) }{n\left( j-1\right) }\right)
\prod\limits_{s=j+1}^{\left\vert n\right\vert }\left( \rho _{s}\left(
t\right) \right) ^{\varepsilon _{s}\left( n\right) }  \notag \\
&&+\left( \sum\limits_{j=2}^{\left\vert n\right\vert }\varepsilon _{j}\left(
n\right) D_{2^{j}}\left( t\right) \sum\limits_{k=1}^{2^{j}-1}\frac{1}{%
k+n\left( j-1\right) }\right) \prod\limits_{s=j+1}^{\left\vert n\right\vert
}\left( \rho _{s}\left( t\right) \right) ^{\varepsilon _{s}\left( n\right) }
\notag \\
&&-\left( \sum\limits_{j=2}^{\left\vert n\right\vert }\varepsilon _{j}\left(
n\right) w_{2^{j}-1}\left( t\right) \sum\limits_{k=1}^{2^{j}-1}\frac{%
D_{k}\left( t\right) }{k+n\left( j-1\right) }\right)
\prod\limits_{s=j+1}^{\left\vert n\right\vert }\left( \rho _{s}\left(
t\right) \right) ^{\varepsilon _{s}\left( n\right) }  \notag \\
&=&:H_{n}^{\left( 21\right) }\left( t\right) +H_{n}^{\left( 22\right)
}\left( t\right) +H_{n}^{\left( 23\right) }\left( t\right) .  \notag
\end{eqnarray}

From (\ref{dir2}) we have%
\begin{equation}
\left\Vert H_{n}^{\left( 21\right) }\right\Vert _{1}\leq
\sum\limits_{j=2}^{\left\vert n\right\vert }\left\Vert D_{2^{j}}\right\Vert
_{1}<\left\vert n\right\vert .  \label{H21}
\end{equation}

Since%
\begin{eqnarray*}
H_{n}^{\left( 22\right) }\left( t\right) &=&\left(
\sum\limits_{j=2}^{\left\vert n\right\vert }\varepsilon _{j}\left( n\right)
D_{2^{j}}\left( t\right) \left( l_{n\left( j\right) }-l_{n\left( j-1\right)
}-\frac{1}{n\left( j-1\right) }\right) \right) \\
&&\times \prod\limits_{s=j+1}^{\left\vert n\right\vert }\left( \rho
_{s}\left( t\right) \right) ^{\varepsilon _{s}\left( n\right) }
\end{eqnarray*}%
from (\ref{dir2}) we get%
\begin{equation}
\left\Vert H_{n}^{\left( 22\right) }\right\Vert _{1}\leq
\sum\limits_{j=2}^{\left\vert n\right\vert }\varepsilon _{j}\left( n\right)
\left( l_{n\left( j\right) }-l_{n\left( j-1\right) }\right) +c\left\vert
n\right\vert \leq c\left\vert n\right\vert .  \label{H22}
\end{equation}

Usin Abel's transformation we obtain%
\begin{eqnarray*}
&&\sum\limits_{k=1}^{2^{j}-1}\frac{D_{k}\left( t\right) }{k+n\left(
j-1\right) } \\
&=&\sum\limits_{k=1}^{2^{j}-2}\left( \frac{1}{k+n\left( j-1\right) }-\frac{1%
}{k+1+n\left( j-1\right) }\right) kK_{k}\left( t\right) \\
&&+\frac{2^{j}-1}{2^{j}-1+n\left( j-1\right) }K_{2^{j}-1}\left( t\right) .
\end{eqnarray*}%
Since (see \cite{sws}) $\sup\limits_{n}\left\Vert K_{n}\right\Vert <\infty $
for $H_{n}^{\left( 23\right) }\left( t\right) $ we can write%
\begin{eqnarray}
&&\left\Vert H_{n}^{\left( 23\right) }\right\Vert _{1}  \label{H23} \\
&\lesssim &\sum\limits_{j=2}^{\left\vert n\right\vert }\varepsilon
_{j}\left( n\right) \sum\limits_{k=0}^{2^{j}-1}\left( \frac{1}{k+n\left(
j-1\right) }-\frac{1}{k+1+n\left( j-1\right) }\right) k  \notag \\
&&+\sum\limits_{j=2}^{\left\vert n\right\vert }\varepsilon _{j}\left(
n\right) \frac{2^{j}-1}{2^{j}-1+n\left( j-1\right) }  \notag \\
&\lesssim &\sum\limits_{j=2}^{\left\vert n\right\vert }\varepsilon
_{j}\left( n\right) \sum\limits_{k=1}^{2^{j}-1}\frac{1}{\left( k+n\left(
j-1\right) \right) }  \notag \\
&&+\sum\limits_{j=2}^{\left\vert n\right\vert }\varepsilon _{j}\left(
n\right) \sum\limits_{k=1}^{2^{j}-1}\frac{n\left( j-1\right) }{\left(
k+n\left( j-1\right) \right) ^{2}}+\sum\limits_{j=2}^{\left\vert
n\right\vert }\varepsilon _{j}\left( n\right) \frac{2^{j}-1}{2^{j}-1+n\left(
j-1\right) }  \notag \\
&\lesssim &\sum\limits_{j=2}^{\left\vert n\right\vert }\varepsilon
_{j}\left( n\right) \left( l_{n\left( j\right) }-l_{n\left( j-1\right)
}\right) +\left\vert n\right\vert \lesssim \left\vert n\right\vert .  \notag
\end{eqnarray}

Combining (\ref{H21-H23})-(\ref{H23}) we conclude that%
\begin{equation}
\left\Vert H_{n}^{\left( 2\right) }\right\Vert _{1}\leq c\left\vert
n\right\vert .  \label{H2}
\end{equation}%
It is easy to see that%
\begin{equation}
\sup\limits_{n}\left\Vert H_{n}^{\left( 3\right) }\right\Vert _{1}\leq c.
\label{H3}
\end{equation}

First, we find upper estimation for $\left\Vert H_{n}^{\left( 1\right)
}\right\Vert _{1}$. We can write%
\begin{eqnarray*}
H_{n}^{\left( 1\right) }\left( t\right) &=&w_{n}\left( t\right) \left(
\sum\limits_{j=2}^{\left\vert n\right\vert }\varepsilon _{j}\left( n\right)
l_{n\left( j-1\right) }\left( D_{2^{j+1}}\left( t\right) -D_{2^{j}}\left(
t\right) \right) \right) \\
&=&w_{n}\left( t\right) \left( \sum\limits_{j=2}^{\left\vert n\right\vert
-1}\left( \varepsilon _{j}\left( n\right) l_{n\left( j-1\right)
}-\varepsilon _{j+1}\left( n\right) l_{n\left( j\right) }\right)
D_{2^{j+1}}\left( t\right) \right) \\
&&+w_{n}\left( t\right) l_{n\left( \left\vert n\right\vert -1\right)
}D_{2^{\left\vert n\right\vert +1}}-w_{n}\left( t\right) \varepsilon
_{2}\left( n\right) l_{n\left( 1\right) }D_{2^{2}}\left( t\right) .
\end{eqnarray*}%
Hence, from (\ref{dir2}) we obtain%
\begin{eqnarray}
\left\Vert H_{n}^{\left( 1\right) }\right\Vert _{1} &\leq
&\sum\limits_{j=2}^{\left\vert n\right\vert -1}\left\vert \varepsilon
_{j}\left( n\right) l_{n\left( j-1\right) }-\varepsilon _{j+1}\left(
n\right) l_{n\left( j\right) }\right\vert +c\left\vert n\right\vert
\label{H1-1} \\
&\leq &\sum\limits_{j=2}^{\left\vert n\right\vert -1}\left\vert \varepsilon
_{j}\left( n\right) -\varepsilon _{j+1}\left( n\right) \right\vert
l_{n\left( j-1\right) }  \notag \\
&&+\sum\limits_{j=2}^{\left\vert n\right\vert -1}\varepsilon _{j+1}\left(
n\right) \left( l_{n\left( j\right) }-l_{n\left( j-1\right) }\right)
+c\left\vert n\right\vert  \notag \\
&\leq &\sum\limits_{j=2}^{\left\vert n\right\vert }\left\vert \varepsilon
_{j}\left( n\right) -\varepsilon _{j+1}\left( n\right) \right\vert
l_{n\left( j-1\right) }+c\left\vert n\right\vert .  \notag
\end{eqnarray}

Now, we find lower estimation for $\left\Vert H_{n}^{\left( 1\right)
}\right\Vert _{1}$. Let $a_{i}$ and $b_{i},i=1,...,s$ be strictly increasing
sequences, i. e.%
\begin{equation*}
0\leq a_{1}\leq b_{1}<a_{2}\leq b_{2}<\cdots <a_{s}\leq b_{s}<a_{s+1}=\infty
\end{equation*}%
for which%
\begin{equation*}
\varepsilon _{j}\left( n\right) =\left\{ 
\begin{array}{l}
1,a_{i}\leq j\leq b_{i} \\ 
0,b_{i}<j<a_{i+1}%
\end{array}%
\right. .
\end{equation*}%
Then it is evident that%
\begin{equation}
b_{j}+1<a_{j+1}.  \label{b<a}
\end{equation}

Set%
\begin{equation*}
A_{k}:=\left( \frac{1}{2^{a_{k}+1}},\frac{1}{2^{a_{k}}}\right)
,B_{k}:=\left( \frac{1}{2^{b_{k}+2}},\frac{1}{2^{b_{k}+1}}\right) ,k=1,...,s.
\end{equation*}%
Let $x\in A_{k}$. Then we can write%
\begin{eqnarray*}
\left\vert H_{n}^{\left( 1\right) }\left( t\right) \right\vert 
&=&\left\vert \sum\limits_{j=2}^{\left\vert n\right\vert }\varepsilon
_{j}\left( n\right) \left( D_{2^{j+1}}\left( t\right) -D_{2^{j}}\left(
t\right) \right) l_{n\left( j-1\right) }\right\vert  \\
&=&\left\vert \sum\limits_{i=1}^{k-1}\sum\limits_{j=a_{i}}^{b_{i}}\left(
D_{2^{j+1}}\left( t\right) -D_{2^{j}}\left( t\right) \right) l_{n\left(
j-1\right) }\right.  \\
&&+\left. \sum\limits_{j=a_{k}}^{b_{k}}\left( D_{2^{j+1}}\left( t\right)
-D_{2^{j}}\left( t\right) \right) l_{n\left( j-1\right) }\right\vert  \\
&=&\left\vert
\sum\limits_{i=1}^{k-1}\sum\limits_{j=a_{i}}^{b_{i}}2^{j}l_{n\left(
j-1\right) }-2^{a_{k}}l_{n\left( a_{k}-1\right) }\right\vert .
\end{eqnarray*}

From (\ref{b<a}) we can write%
\begin{eqnarray*}
\sum\limits_{i=1}^{k-1}\sum\limits_{j=a_{i}}^{b_{i}}2^{j}l_{n\left(
j-1\right) } &\leq &l_{n\left( b_{k-1}-1\right)
}\sum\limits_{i=1}^{k-1}\left( 2^{b_{i}+1}-2^{a_{i}}\right) \\
&\leq &l_{n\left( b_{k-1}-1\right) }\sum\limits_{i=1}^{k-1}\left(
2^{b_{i}+1}-2^{b_{i-1}+1}\right) \\
&\leq &2^{b_{k-1}+1}l_{n\left( b_{k-1}-1\right) } \\
&\leq &2^{b_{k-1}+1}l_{n\left( a_{k}-1\right) }.
\end{eqnarray*}%
Consequently,%
\begin{equation*}
\left\vert H_{n}^{\left( 1\right) }\left( t\right) \right\vert \geq
2^{a_{k}}l_{n\left( a_{k}-1\right) }-2^{b_{k-1}+1}l_{n\left( a_{k}-1\right)
}\geq 2^{a_{k}-1}l_{n\left( a_{k}-1\right) }.
\end{equation*}%
Integrating on $A_{k}$ we get%
\begin{equation}
\int\limits_{A_{k}}\left\vert H_{n}^{\left( 1\right) }\left( t\right)
\right\vert dt\geq \int\limits_{A_{k}}2^{a_{k}-1}l_{n\left( a_{k}-1\right)
}dt=\frac{l_{n\left( a_{k}-1\right) }}{4}.  \label{A}
\end{equation}

On the interval $B_{k}$ we have%
\begin{eqnarray*}
\left\vert H_{n}^{\left( 1\right) }\left( t\right) \right\vert &=&\left\vert
\sum\limits_{i=1}^{k}\sum\limits_{j=a_{i}}^{b_{i}}\left( D_{2^{j+1}}\left(
t\right) -D_{2^{j}}\left( t\right) \right) l_{n\left( j-1\right) }\right\vert
\\
&=&\sum\limits_{i=1}^{k}\sum\limits_{j=a_{i}}^{b_{i}}2^{j}l_{n\left(
j-1\right) }\geq l_{\left( b_{k}-1\right) }2^{b_{k}}.
\end{eqnarray*}%
Hence,%
\begin{equation}
\int\limits_{B_{k}}\left\vert H_{n}^{\left( 1\right) }\left( t\right)
\right\vert dt\geq \int\limits_{B_{k}}l_{\left( b_{k}-1\right) }2^{b_{k}}dt=%
\frac{l_{n\left( b_{k}-1\right) }}{4}.  \label{B}
\end{equation}

Since $A_{i},B_{i},i=1,...,s$ are pairwise disjoint from (\ref{A}) and (\ref%
{B}) we have%
\begin{eqnarray}
\int\limits_{\mathbb{I}}\left\vert H_{n}^{\left( 1\right) }\left( t\right)
\right\vert dt &\geq &\sum\limits_{k=1}^{s}\left(
\int\limits_{A_{k}}\left\vert H_{n}^{\left( 1\right) }\left( t\right)
\right\vert dt+\int\limits_{B_{k}}\left\vert H_{n}^{\left( 1\right) }\left(
t\right) \right\vert dt\right)  \label{H1} \\
&\geq &\frac{1}{4}\sum\limits_{k=1}^{s}\left( l_{n\left( a_{k}-1\right)
}+l_{n\left( b_{k}-1\right) }\right)  \notag \\
&=&\frac{1}{4}\sum\limits_{k=1}^{\left\vert n\right\vert }\left\vert
\varepsilon _{k}\left( n\right) -\varepsilon _{k+1}\left( n\right)
\right\vert l_{n\left( k-1\right) }.  \notag
\end{eqnarray}

Combining (\ref{H1-H3})-(\ref{H1-1}) and (\ref{H1}) we complete the proof of
Theorem \ref{est}.
\end{proof}

\section{Almost Everywhere Convergence of logarithmic Means}

For a non-negative integer $n$ let us denote%
\begin{equation*}
V_{S}\left( n\right) :=\sum\limits_{i=0}^{\infty }\left\vert \varepsilon
_{i}\left( n\right) -\varepsilon _{i+1}\left( n\right) \right\vert
+\varepsilon _{0}\left( n\right)
\end{equation*}%
and%
\begin{equation*}
V_{L}\left( n\right) :=\frac{1}{\left\vert n\right\vert }\sum\limits_{k=1}^{%
\left\vert n\right\vert }\left\vert \varepsilon _{k}\left( n\right)
-\varepsilon _{k+1}\left( n\right) \right\vert l_{n\left( k-1\right) }.
\end{equation*}

It is known that if $n_{j}<n_{j+1}$, 
\begin{equation}
\sup\limits_{j}V_{s}\left( n_{j}\right) <\infty ,  \label{Vs}
\end{equation}%
then a. e. $S_{n_{j}}\left( f\right) \rightarrow f$. On the other hand,
Koniagin \cite{kon} proved that the condition (\ref{Vs}) is not necessary
for a. e. convergence of subsequence of partial sums. Moreover, he gave
negative answer to the question of Balashov and proved the validity of the
following theorem.

\begin{kon}[Konyagin]
Suppose $\left\{ n_{A}\right\} $ is an increasing sequence of natural
numbers, $k_{A}:=\left[ \log _{2}n_{A}\right] +1,$ and $2^{k_{A}}$ is a
divider of $n_{A+1}$ for all $A$. Then $S_{n_{A}}\left( f\right) \rightarrow
f$ a. e. for any function $f\in L_{1}\left( \mathbb{I}\right) $.
\end{kon}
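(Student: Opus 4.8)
The plan is to prove Konyagin's theorem by reducing the convergence of $S_{n_A}(f)$ to the maximal operator $E^{*}$ associated to the dyadic partial sums $S_{2^k}$, for which a.e. convergence (and weak $(1,1)$ boundedness) is classical. First I would fix $A$ and write $n_A = 2^{k_{A-1}} m_A + r_A$ where $0\le r_A < 2^{k_{A-1}}$; by hypothesis $2^{k_{A-1}} \mid n_A$, so in fact $r_A = 0$ and $n_A = 2^{k_{A-1}} m_A$ for a suitable integer $m_A$ with $m_A < 2^{k_A - k_{A-1} +1}$. Using the standard decomposition of the Walsh--Dirichlet kernel under splitting of the index at a power of two, namely
\begin{equation*}
D_{n_A}(x) = D_{2^{k_{A-1}} m_A}(x) = \sum_{i} \varepsilon_i(m_A)\, w_{2^{k_{A-1}}\cdot(\text{lower part})}(x)\, 2^{k_{A-1}+i}\chi_{I_{k_{A-1}+i}}(x)\,(\pm 1),
\end{equation*}
one expresses $S_{n_A}(x,f)$ as a sum over the binary digits of $m_A$ of modulated dyadic partial sums $S_{2^{k_{A-1}+i}}$ of $f$ (times Walsh functions, which do not affect absolute values). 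The key structural point supplied by the hypothesis is the gap $2^{k_{A-1}} \mid n_A$: it guarantees that the "low-frequency" block of $S_{n_A}$ is exactly $S_{2^{k_{A-1}}}(x,f)$, and the remaining terms involve only $S_{2^j}$ with $k_{A-1} \le j \le k_A$.

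Next I would dominate the "remainder" — the difference $S_{n_A}(x,f) - S_{2^{k_{A-1}}}(x,f)$ — pointwise. The crucial observation is that each of the $D_{2^{k_{A-1}+i}} - D_{2^{k_{A-1}}}$ type pieces is supported, after the modulations, on the small dyadic interval $I_{k_{A-1}}(x\dot+$ something$)$, so the remainder is controlled by $2^{k_{A-1}}\int_{I_{k_{A-1}}(x)\text{-type set}} |f|$, which is at most $E_{k_{A-1}}(x,|f|) \le E^{*}(x,f)$ up to an absolute constant. More precisely, I expect a bound of the form
\begin{equation*}
\bigl| S_{n_A}(x,f) - S_{2^{k_{A-1}}}(x,f) \bigr| \le C\, E^{*}(\tau x, f),
\end{equation*}
where $\tau$ is a dyadic translation (or a small finite collection of them) depending only on the digits of $m_A$; the absoluteness of $C$ is what makes this useful. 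One then uses that $\sup_A |S_{2^{k_{A-1}}}(x,f)| \le E^{*}(x,f)$ and that $E_{k}(x,f) \to f(x)$ a.e. (the classical dyadic martingale convergence theorem for $L_1$), together with $k_{A-1}\to\infty$, to get $S_{2^{k_{A-1}}}(x,f)\to f(x)$ a.e.

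Finally I would run the standard approximation argument: given $\varepsilon>0$, pick a Walsh polynomial $P$ with $\|f-P\|_1 < \varepsilon$; then $S_{n_A}(x,P) \to P(x)$ everywhere for $A$ large (eventually $n_A$ exceeds the degree of $P$), while $|S_{n_A}(x,f-P) - (f-P)(x)|$ is dominated, via the pointwise estimate above applied to $f-P$, by $C E^{*}(\tau x, f-P) + |S_{2^{k_{A-1}}}(x,f-P)| + |(f-P)(x)|$; invoking the weak $(1,1)$ inequality for $E^{*}$ (and for the translated maximal functions, whose distribution is the same) bounds the measure of the set where this exceeds $\lambda$ by $C\varepsilon/\lambda$. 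Letting $\varepsilon\to 0$ gives $\limsup_A |S_{n_A}(x,f) - f(x)| = 0$ a.e. The main obstacle I anticipate is establishing the pointwise domination of the remainder by a (translated) copy of $E^{*}$ with an \emph{absolute} constant, independent of $A$ and of the length of the binary expansion of $m_A$ — this is exactly where the divisibility hypothesis $2^{k_{A-1}}\mid n_A$ must be used to collapse the low-order contribution, and handling the telescoping of the $D_{2^{k_{A-1}+i}}$ pieces carefully (so that the Walsh modulations cancel in absolute value and the supports nest properly) is the delicate part; the rest is routine.
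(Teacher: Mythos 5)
The paper itself does not prove Theorem K; it is quoted from Konyagin \cite{kon} as background, so your argument has to stand on its own. It does not, and the failure is exactly at the step you yourself flag as delicate: no pointwise bound of the form $\left\vert S_{n_{A}}\left( x,f\right) -S_{2^{k_{A-1}}}\left( x,f\right) \right\vert \leq C\,E^{\ast }\left( \tau x,f\right) $ with an absolute constant $C$ (nor with a bounded number of dyadic translates $\tau $) can hold. Since $E^{\ast }\left( \cdot ,f\right) \leq \left\Vert f\right\Vert _{\infty }$, such a bound would give $\left\Vert S_{n_{A}}f\right\Vert _{\infty }\leq \left( C+1\right) \left\Vert f\right\Vert _{\infty }$ for every bounded $f$, i.e. the Lebesgue constants $\left\Vert D_{n_{A}}\right\Vert _{1}$ would be bounded by $C+1$ uniformly in $A$. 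But $\left\Vert D_{n}\right\Vert _{1}$ is comparable, with absolute constants, to the dyadic variation $V_{S}\left( n\right) $ (see \cite{sws}), and Konyagin's hypothesis puts no bound on $V_{S}\left( n_{A}\right) $: the example $n_{A}=2^{A^{2}}\sum_{i=0}^{A}4^{i}$ mentioned in the paper satisfies the divisibility condition while $\sup_{A}V_{S}\left( n_{A}\right) =\infty $. That is the whole point of the theorem --- it goes beyond the classical sufficient condition (\ref{Vs}) --- so any proof that majorizes $S_{n_{A}}$ by an absolutely bounded multiple of (translates of) $E^{\ast }$ proves too much and must be wrong. If instead you let the number of translated copies of $E^{\ast }$ grow with $A$, the weak $(1,1)$ constants of your maximal majorant blow up and the closing density argument no longer yields a.e. convergence for all $f\in L_{1}\left( \mathbb{I}\right) $.

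To see concretely where the hoped-for collapse stops, write $n_{A}=2^{N}m_{A}$ with $N=k_{A-1}$. Then $D_{n_{A}}\left( t\right) =D_{2^{N}}\left( t\right) D_{m_{A}}\left( \tau _{N}t\right) $, where $\tau _{N}t$ denotes the point of $\mathbb{I}$ with binary digits $t_{N},t_{N+1},\dots $; hence the remainder kernel $D_{n_{A}}-D_{2^{N}}=D_{2^{N}}\left( t\right) \left( D_{m_{A}}\left( \tau _{N}t\right) -1\right) $ is indeed supported on $I_{N}$ --- this localization is what the divisibility buys --- but its $L_{1}$ norm is $\left\Vert D_{m_{A}}-1\right\Vert _{1}\asymp V_{S}\left( m_{A}\right) =V_{S}\left( n_{A}\right) $, which is unbounded. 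Equivalently, the Paley-type telescoping you invoke writes $S_{n_{A}}f-S_{2^{N}}f$ as a sum of up to $k_{A}-k_{A-1}$ terms of the form $w\cdot S_{2^{j}}\left( x,f\,w^{\prime }\right) $ with Walsh modulations $w,w^{\prime }$ sitting between $f$ and the kernel; these cannot be replaced by $S_{2^{j}}\left( x,\left\vert f\right\vert \right) $, and the triangle inequality therefore produces a factor of order $V_{S}\left( n_{A}\right) $, not an absolute constant. So your treatment of the main term ($S_{2^{k_{A-1}}}f\rightarrow f$ a.e. by martingale convergence) and the final density scheme are standard and fine, but they rest entirely on an estimate that is false; Konyagin's proof must, and does, exploit the block structure in an essentially different way than through a uniform maximal-function domination of bounded-variation type.
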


For instance, a sequence $\left\{ n_{A}\right\}
,n_{A}:=2^{A^{2}}\sum\limits_{i=0}^{A}4^{i},$ such that $\sup%
\limits_{n_{A}}V\left( n_{A}\right) =\infty ,$ satisfies the hypotheses of
the theorem.

Almost ewerywhere convergence of $\left\{ t_{2^{A}}\left( f\right) :A\geq
1\right\} $ with respect to Walsh-Paley system was studied by author \cite%
{gogamapn}. In particular, the following is proved

\begin{G}
Let $f\in L_{1}\left( \mathbb{I}\right) $. Then $t_{2^{A}}\left( x,f\right)
\rightarrow f\left( x\right) $ as $A\rightarrow \infty $ a. e. $x\in \mathbb{%
I}$.
\end{G}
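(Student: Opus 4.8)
The plan is to reduce the almost-everywhere convergence of $t_{2^A}(x,f)$ to a maximal-function estimate of the form $\sup_A |t_{2^A}(x,f)| \le C\, E^*(x,f) + C\, M f(x)$ for a suitable (dyadic) maximal operator, together with a density argument. Recall that $t_{2^A}(f) = f * F_{2^A}$, so everything rests on understanding the logarithmic kernel $F_{2^A}$ for $n = 2^A$. This is the cleanest case of the kernel analysis from the proof of Theorem~\ref{est}: for $n = 2^A$ we have $|n| = A$, $\varepsilon_A(n) = 1$ and $\varepsilon_k(n) = 0$ for $k < A$, so $n(k-1) = 0$ for $k \le A-1$ and the "logarithmic variation" sum collapses almost entirely. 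In fact, writing out the decomposition $\sum_{j=1}^{2^A-1} D_{2^A-j}/j = H^{(1)}_{2^A} + H^{(2)}_{2^A} + H^{(3)}_{2^A}$ from equation~(\ref{H1-H3}), the terms $H^{(2)}$ and $H^{(3)}$ are controlled in $L_1$ uniformly in $A$ by the bounds already established, but for $n = 2^A$ the relevant sums have only the single index $j = A$ surviving, so one in fact gets much more: these pieces are essentially bounded by kernels whose maximal operators are of weak type $(1,1)$.

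The key step is a pointwise kernel estimate. For $n = 2^A$, $l_n = l_{2^A} = \sum_{k=1}^{2^A - 1} 1/k \sim A$, and I expect to prove
\begin{equation*}
|F_{2^A}(t)| \lesssim \frac{1}{A}\sum_{j=0}^{A} D_{2^j}(t) + \frac{1}{A}\sum_{j=0}^{A} 2^j\, \mathbf{1}_{I_j(0)}(t)\, (\text{something summable}),
\end{equation*}
or more precisely that $\sup_A |F_{2^A}(t)|$ is dominated by a single integrable function plus a bounded multiple of the Fej\'er-type kernel whose averages are of weak type $(1,1)$. Concretely: using $D_{2^A - k}(t) = D_{2^A}(t) - w_{2^A-1}(t) D_k(t)$, one writes $\sum_{j=1}^{2^A-1} D_{2^A-j}(t)/j = l_{2^A} D_{2^A}(t) - w_{2^A-1}(t)\sum_{j=1}^{2^A-1} D_j(t)/j$. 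Dividing by $l_{2^A} \sim A$, the first term contributes $D_{2^A}(t)$, whose partial-sum maximal operator $E^*$ is already available in the paper. The second term, $\frac{1}{A\, } \sum_{j=1}^{2^A-1} D_j(t)/j$, is (up to the factor $1/A$) the Riesz logarithmic kernel $R_{2^A}$; by Abel summation it is $\frac{1}{A}\bigl(\sum_{j=1}^{2^A-2} (\frac1j - \frac1{j+1}) j K_j(t) + K_{2^A-1}(t)\bigr) = \frac{1}{A}\sum_{j=1}^{2^A-1}\frac{K_j(t)}{j+1} + O(1/A)\,\|K\|$, and since the Fej\'er means are of weak type $(1,1)$ with a uniformly bounded maximal operator (Schipp; see \cite{sws}), the operator $f \mapsto \sup_A \frac{1}{A}\sum_{j=1}^{2^A-1}\frac{|f * K_j|}{j+1}$ is dominated by the Fej\'er maximal operator and hence of weak type $(1,1)$.

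So the argument structure is: (i) derive the identity $t_{2^A}(x,f) = S_{2^A}(x,f) - w_{2^A-1}\! * \bigl(\tfrac{1}{l_{2^A}}\sum_{j=1}^{2^A-1} S_j(x,f)/j\bigr)$, i.e. express $t_{2^A}$ via $S_{2^A}$ and (a rescaled) Riesz logarithmic mean; (ii) bound $\sup_A |S_{2^A}(x,f)|$ pointwise by $E^*(x,f)$, which is of weak type $(1,1)$; (iii) bound $\sup_A \bigl|\tfrac{1}{l_{2^A}}\sum_{j=1}^{2^A-1} S_j(x,f)/j\bigr|$ by a constant times the Fej\'er maximal operator via Abel summation, using $\sup_n \|K_n\|_1 < \infty$ and the weak $(1,1)$ bound for Fej\'er means, and observe that convolving with $w_{2^A-1}$ does not change absolute values; (iv) conclude $\sup_A |t_{2^A}(x,f)|$ is of weak type $(1,1)$, hence by the standard Banach principle / density argument (the claim being trivial for Walsh polynomials, which are dense in $L_1$) the a.e.\ convergence $t_{2^A}(x,f) \to f(x)$ follows.

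The main obstacle is step (iii): making the Abel-summation reduction rigorous and verifying that the resulting averaged Fej\'er operator genuinely inherits the weak-type $(1,1)$ bound uniformly in $A$ — in particular handling the endpoint term $K_{2^A-1}$ and the coefficients $\frac{1}{j+1}$ correctly, and checking that the extra $1/l_{2^A} \sim 1/A$ normalization (absent in the classical Riesz mean) only helps. A secondary technical point is that one should work with the dyadic structure throughout, so that "$E^*$" and "Fej\'er maximal operator" are the dyadic (Walsh) versions whose weak-$(1,1)$ bounds are the ones cited from \cite{sws}; no genuinely new harmonic analysis is needed beyond what is quoted.
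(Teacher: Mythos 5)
Your reduction is the right one: for $n=2^{A}$ the identity $D_{2^{A}-j}=D_{2^{A}}-w_{2^{A}-1}D_{j}$ gives $t_{2^{A}}f=S_{2^{A}}f$ minus a normalized, $w_{2^{A}-1}$-modulated Riesz-logarithmic part, the first term is controlled by $E^{\ast}$, and the density/Banach-principle step is standard. This is exactly the specialization to $m_{A}=2^{A}$ of the decomposition (\ref{H1-H3}), (\ref{H21-H23}) used in this paper, your second term being the term $H^{(23)}$.

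The gap is in your step (iii), the one you yourself flag as the main obstacle. Since the Walsh functions are characters, $f\ast\bigl(w_{2^{A}-1}G\bigr)(x)=w_{2^{A}-1}(x)\,\bigl((fw_{2^{A}-1})\ast G\bigr)(x)$; hence after Abel summation your expression is dominated not by the Fej\'er maximal operator $\sigma^{\ast}f=\sup_{n}|f\ast K_{n}|$ of $f$, but by $\sigma^{\ast}(fw_{2^{A}-1})(x)$, where the modulated function changes with $A$. So the remark that ``convolving with $w_{2^{A}-1}$ does not change absolute values'' does not close the argument: Schipp's weak $(1,1)$ bound gives $\lambda\,\bigl|\{\sigma^{\ast}(fw_{2^{A}-1})>\lambda\}\bigr|\lesssim\Vert f\Vert_{1}$ for each fixed $A$, but a supremum over $A$ of weak-type operators applied to different modulations of $f$ is not automatically of weak type $(1,1)$, and pointwise domination by $|f|\ast\sup_{A}|\cdot|$ is too lossy (that supremum of kernels is not integrable). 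This modulated maximal estimate is precisely the nontrivial heart of Theorem G: in the present paper it is not reproved but quoted as inequality (\ref{sup23}) from \cite{gat-gogi}, and it is the substantive content of the original proof in \cite{gogamapn}; it requires an argument exploiting the specific structure of the kernels $w_{2^{A}-1}K_{j}$, $j<2^{A}$ (Calder\'on--Zygmund/quasi-locality type reasoning), not merely $\sup_{n}\Vert K_{n}\Vert_{1}<\infty$ plus Abel summation. The remaining steps of your outline (the bound $\sup_{A}|S_{2^{A}}f|\le E^{\ast}f$, the endpoint term $K_{2^{A}-1}$, convergence for Walsh polynomials) are fine.
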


Nagy in \cite{nagy} established a similar result for the Walsh-Kaczmarz
system. However, a divergence on the set with positive measure for the whole
sequence $\left\{ t_{n}\left( f\right) :n\geq 1\right\} $ was proved by
G\'at and Goginava \cite{gat-gogi}. Memic \cite{memic} improved Theorem G
and proved that the following is true.

\begin{M}
Let $f\in L_{1}\left( \mathbb{I}\right) $ and%
\begin{equation}
\sup\limits_{A}\frac{1}{\left\vert m_{A}\right\vert }\sum\limits_{k=1}^{%
\left\vert m_{A}\right\vert }\varepsilon _{k}\left( m_{A}\right)
l_{m_{A}\left( k-1\right) }<\infty .  \label{mem}
\end{equation}%
Then $t_{m_{A}}\left( x,f\right) \rightarrow f\left( x\right) $ as $%
A\rightarrow \infty $ for a. e. $x\in \mathbb{I}$.
\end{M}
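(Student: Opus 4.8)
The plan is to reduce the assertion to a weak type $(1,1)$ estimate --- valid under hypothesis (\ref{mem}) --- for the maximal operator $t^{\ast }f:=\sup_{A}\left\vert t_{m_{A}}(\cdot ,f)\right\vert $, and then to combine it with the trivial remark that $t_{n}P\rightarrow P$ uniformly for every Walsh polynomial $P$: if $\deg P<N$ then $S_{k}P=P$ for all $k\geq N$, and since $\sum_{k=1}^{n-1}(n-k)^{-1}=l_{n}$ one gets $t_{n}P(x)-P(x)=\frac{1}{l_{n}}\sum_{k=1}^{N-1}\frac{S_{k}P(x)-P(x)}{n-k}=O\!\left( \frac{1}{n\,l_{n}}\right) $. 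Granting the weak type $(1,1)$ bound for $t^{\ast }$, one approximates $f\in L_{1}\left( \mathbb{I}\right) $ in $L_{1}$-norm by Walsh polynomials and runs the usual density argument to conclude $t_{m_{A}}(x,f)\rightarrow f(x)$ for a.e.\ $x$. Thus everything reduces to the maximal inequality. For this I use the decomposition $l_{n}F_{n}=H_{n}^{(1)}+H_{n}^{(2)}+H_{n}^{(3)}$ of (\ref{H1-H3}) and the further splitting (\ref{H21-H23}) of $H_{n}^{(2)}$; since $t_{n}(x,f)=\frac{1}{l_{n}}\sum_{i=1}^{3}\left( H_{n}^{(i)}\ast f\right) (x)$ it suffices to dominate each $\frac{1}{l_{m_{A}}}H_{m_{A}}^{(i)}\ast f$ by a fixed multiple of an operator already known to be of weak type $(1,1)$.

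The decisive term is $H_{n}^{(1)}$, whose convolution with $f$ turns out to be a \emph{weighted sum of dyadic martingale differences}. From $D_{2^{j}}\rho _{j}=D_{2^{j+1}}-D_{2^{j}}$ and $w_{a}w_{b}=w_{a\oplus b}$ one checks that for every $j$ with $\varepsilon _{j}(n)=1$,
\begin{equation*}
w_{n}(t)\left( D_{2^{j+1}}(t)-D_{2^{j}}(t)\right) =D_{n-n(j-1)}(t)-D_{n-n(j)}(t),
\end{equation*}
so by (\ref{H1-H3}), $H_{n}^{(1)}(t)=\sum_{j=2}^{\left\vert n\right\vert }\varepsilon _{j}(n)\,l_{n(j-1)}\left( D_{n-n(j-1)}(t)-D_{n-n(j)}(t)\right) $. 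When $\varepsilon _{j}(n)=1$ we have $n-n(j)\equiv 0\pmod{2^{j+1}}$ and $\left( n-n(j-1)\right) -\left( n-n(j)\right) =2^{j}$, hence $D_{n-n(j-1)}-D_{n-n(j)}=w_{n-n(j)}D_{2^{j}}$; since $D_{2^{j}}=2^{j}\mathbf{1}_{I_{j}}$ by (\ref{dir2}),
\begin{equation*}
\left\vert \left( D_{n-n(j-1)}-D_{n-n(j)}\right) \ast f(x)\right\vert =2^{j}\left\vert \int_{I_{j}}f(x+t)w_{n-n(j)}(t)\,dt\right\vert \leq E_{j}(x,|f|)\leq E^{\ast }(x,f).
\end{equation*}
Summing over $j$, dividing by $l_{n}$, and using $l_{n}\geq \ln n\geq \left\vert n\right\vert \ln 2$,
\begin{equation*}
\frac{1}{l_{n}}\left\vert H_{n}^{(1)}\ast f(x)\right\vert \leq \frac{E^{\ast }(x,f)}{l_{n}}\sum_{k=1}^{\left\vert n\right\vert }\varepsilon _{k}(n)\,l_{n(k-1)}\lesssim \left( \frac{1}{\left\vert n\right\vert }\sum_{k=1}^{\left\vert n\right\vert }\varepsilon _{k}(n)\,l_{n(k-1)}\right) E^{\ast }(x,f).
\end{equation*}
By (\ref{mem}) the averaged sum in parentheses is bounded along $\left\{ m_{A}\right\} $, so $\sup_{A}\frac{1}{l_{m_{A}}}\left\vert H_{m_{A}}^{(1)}\ast f\right\vert \leq cE^{\ast }(\cdot ,f)$, which is of weak type $(1,1)$; this is the only place where (\ref{mem}) enters, and it accounts for its exact form.

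The other pieces are estimated uniformly in $n$. For $H_{n}^{(3)}$ one has $n(1)\leq 3$, so its inner sum has at most two terms and $\left\vert H_{n}^{(3)}(t)\right\vert \leq c$ pointwise, whence $\frac{1}{l_{n}}\left\vert H_{n}^{(3)}\ast f(x)\right\vert \leq c\left\Vert f\right\Vert _{1}/l_{n}\rightarrow 0$. In $H_{n}^{(21)}$ and $H_{n}^{(22)}$ every summand is, up to a scalar, $D_{2^{j}}(t)$ multiplied by $w_{n-n(j)}(t)$ with $n-n(j)$ a multiple of $2^{j+1}$, so exactly as above its convolution with $f$ is $\leq E^{\ast }(\cdot ,f)$ in modulus; together with the elementary bounds $\sum_{j\geq 2}\varepsilon _{j}(n)/n(j-1)=O(1)$ and $\sum_{j\geq 2}\varepsilon _{j}(n)\left( l_{n(j)}-l_{n(j-1)}\right) \leq l_{n}$ this yields $\frac{1}{l_{n}}\left( \left\vert H_{n}^{(21)}\ast f\right\vert +\left\vert H_{n}^{(22)}\ast f\right\vert \right) \lesssim E^{\ast }(\cdot ,f)$. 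Finally, the modulating factor of the $j$-th summand of $H_{n}^{(23)}$ is $w_{\nu _{j}-1}$ with $\nu _{j}:=n-n(j-1)$, and, since $\int f(x+t)w_{\nu _{j}-1}(t)h(t)\,dt=w_{\nu _{j}-1}(x)\bigl( h\ast (w_{\nu _{j}-1}f)\bigr) (x)$,
\begin{equation*}
\left\vert H_{n}^{(23)}\ast f(x)\right\vert \leq \sum_{j:\,\varepsilon _{j}(n)=1}\left\vert \sum_{k=1}^{2^{j}-1}\frac{S_{k}(x,w_{\nu _{j}-1}f)}{k+n(j-1)}\right\vert .
\end{equation*}
Performing on the inner sum the Abel transformation into Fej\'er means $\sigma _{k}(\cdot ,g):=K_{k}\ast g$ used in the proof of Theorem~\ref{est} (and using $\sup_{m}\left\Vert K_{m}\right\Vert _{1}<\infty $) bounds it by $c\left( l_{n(j)}-l_{n(j-1)}+1\right) $ times $\sup_{k<2^{j}}\left\vert \sigma _{k}(x,w_{\nu _{j}-1}f)\right\vert $, so after summing over $j$ and dividing by $l_{n}$,
\begin{equation*}
\frac{1}{l_{n}}\left\vert H_{n}^{(23)}\ast f(x)\right\vert \lesssim \max_{j:\,\varepsilon _{j}(n)=1}\ \sup_{k<2^{j}}\left\vert \sigma _{k}(x,w_{\nu _{j}-1}f)\right\vert .
\end{equation*}
Because the modulations $w_{\nu _{j}-1}$ have their $j$ lowest bits equal to $1$ while the summation orders are $<2^{j}$, the right-hand side is dominated, uniformly in $n$, by a Walsh--Fej\'er type maximal operator $\Phi f$, whose weak type $(1,1)$ follows from the atomic $H^{1}$-technique for the Walsh system (see \cite{sws}). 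Putting the four bounds together gives $t^{\ast }f\lesssim E^{\ast }(\cdot ,f)+\Phi f+\left\Vert f\right\Vert _{1}$, so $t^{\ast }$ is of weak type $(1,1)$, and with the density argument this proves Theorem~M. The conceptual core is the identity for $H_{n}^{(1)}$ displayed above --- it is exactly why the condition must take the form (\ref{mem}) --- while the main technical obstacle is the last step, the weak $(1,1)$ estimate for the modulated Walsh--Fej\'er maximal operator arising from $H_{n}^{(23)}$, where most of the work is concentrated.
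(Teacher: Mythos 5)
Your architecture is the same as the paper's proof of Theorem \ref{a.e.} (which contains Theorem M, since condition (\ref{mem}) implies condition (\ref{new})): the decomposition (\ref{H1-H3}) together with (\ref{H21-H23}), pointwise domination of the $H^{(1)}$-, $H^{(21)}$-, $H^{(22)}$-, $H^{(3)}$-pieces by $E^{\ast }\left( \cdot ,f\right) $ and $\left\Vert f\right\Vert _{1}$, a weak type $(1,1)$ bound for the maximal operator along $\left\{ m_{A}\right\} $, and the density argument. Your treatment of $H^{(1)}$ --- the only place where (\ref{mem}) is used --- is correct, and under this stronger hypothesis it is even simpler than the paper's argument for (\ref{new}) (a direct absolute-value bound already suffices; your martingale-difference identity $w_{n}\left( D_{2^{j+1}}-D_{2^{j}}\right) =D_{n-n\left( j-1\right) }-D_{n-n\left( j\right) }$ is correct but not needed). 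The bounds for $H^{(21)}$, $H^{(22)}$, $H^{(3)}$ and the polynomial/density step are also fine.

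The genuine gap is at $H^{(23)}$, exactly where you yourself say most of the work is concentrated. After the Abel transformation you dominate $\frac{1}{l_{n}}\left\vert H_{n}^{(23)}\ast f\right\vert $ by $\max_{j}\sup_{k<2^{j}}\left\vert \sigma _{k}\left( \cdot ,w_{\nu _{j}-1}f\right) \right\vert $ and then assert that this is controlled by a ``Walsh--Fej\'er type maximal operator'' whose weak type $(1,1)$ ``follows from the atomic $H^{1}$-technique (see \cite{sws})''. That assertion is exactly the statement that needs a proof. What is required is a weak $(1,1)$ bound for a supremum over an infinite family of modulations, namely $\sup \left\{ \left\vert \left( w_{\nu }f\right) \ast K_{k}\right\vert :k<2^{j},\ \nu \equiv 2^{j}-1\ (\mathrm{mod}\ 2^{j+1})\right\} $; since no hypothesis on $\left\{ m_{A}\right\} $ is used in this piece, it must in effect hold for all such pairs. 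For each fixed $\nu $ the Fej\'er maximal theorem applies with a constant independent of $\nu $, but a supremum of weak-$(1,1)$ operators over infinitely many modulations is not automatically weak $(1,1)$: without the structural restriction on $\nu $ such an operator is of Carleson type and no $L_{1}$ bound is available. The only possible rescue is the specific restriction you point out (the lowest $j$ bits of $\nu $ equal to $1$ while the Fej\'er order is $<2^{j}$), and extracting a weak $(1,1)$ estimate from this restriction is precisely the nontrivial result that the paper imports from \cite{gat-gogi} as (\ref{sup23}); the book \cite{sws} contains no theorem of this kind, and the standard atomic argument for $\sup_{k}\left\vert f\ast K_{k}\right\vert $ does not transfer, because the cancellation of an atom is destroyed by the $\nu $-dependent modulation. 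Note also that replacing the weighted average over $j$ by a maximum over $\left( j,\nu _{j}\right) $ strengthens what has to be proved: the known estimate (\ref{sup23}) concerns the averaged operator $\sup_{A}\left\vert f\ast H_{m_{A}}^{(23)}\right\vert /\left\vert m_{A}\right\vert $, and it is not clear that your pointwise larger max-operator is weak $(1,1)$ at all. As it stands your proof is complete only modulo this unproved maximal inequality; either supply its proof or quote (\ref{sup23}) from \cite{gat-gogi}, as the paper does.
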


In this paper we are going to replace condition (\ref{mem}) with more weaker
condition%
\begin{equation}
\sup\limits_{A}\frac{1}{\left\vert m_{A}\right\vert }\sum\limits_{k=1}^{%
\left\vert m_{A}\right\vert }\left\vert \varepsilon _{k}\left( m_{A}\right)
-\varepsilon _{k+1}\left( m_{A}\right) \right\vert l_{m_{A}\left( k-1\right)
}<\infty .  \label{new}
\end{equation}

It is easy to see that condition (\ref{mem}) imply condition (\ref{new}), on
other hand, for the sequence $\left\{ 2^{A}-1:A\in \mathbb{N}\right\} $
condition (\ref{mem}) does not holds and condition (\ref{new}) holds. So, we
prove that the following is valid.

\begin{theorem}
\label{a.e.}Let $f\in L_{1}\left( \mathbb{I}\right) $ and condition (\ref%
{new}) is holds. Then $t_{m_{A}}\left( x,f\right) \rightarrow f\left(
x\right) $ as $A\rightarrow \infty $ for a. e. $x\in \mathbb{I}$.
\end{theorem}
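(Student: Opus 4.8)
The plan is the classical one for a.e. convergence: reduce to (i) convergence on a dense class and (ii) a weak type $(1,1)$ bound for the maximal operator
\begin{equation*}
t^{\ast }f\left( x\right) :=\sup_{A}\left\vert t_{m_{A}}\left( x,f\right) \right\vert ,
\end{equation*}
the constant in (ii) being allowed to depend on the supremum in (\ref{new}). For the dense class I take the Walsh polynomials: if $P$ has Walsh order $<2^{s}$ then $S_{k}\left( x,P\right) =P\left( x\right) $ for every $k\geq 2^{s}$, hence
\begin{equation*}
t_{m_{A}}\left( x,P\right) -P\left( x\right) =\frac{1}{l_{m_{A}}}\sum\limits_{k=1}^{2^{s}-1}\frac{S_{k}\left( x,P\right) -P\left( x\right) }{m_{A}-k}\longrightarrow 0
\end{equation*}
uniformly in $x$ as $A\rightarrow \infty $; this uses nothing about (\ref{new}). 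So everything is reduced to (ii), and since the sequence $\left\{ m_{A}\right\} $ is arbitrary subject to (\ref{new}) it suffices to majorize $t^{\ast }f$ pointwise by a fixed operator of weak type $(1,1)$.

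For (ii) I would re-use the splitting produced in the proof of Theorem \ref{est}, $l_{n}F_{n}=H_{n}^{\left( 1\right) }+H_{n}^{\left( 2\right) }+H_{n}^{\left( 3\right) }$ with $H_{n}^{\left( 2\right) }=H_{n}^{\left( 21\right) }+H_{n}^{\left( 22\right) }+H_{n}^{\left( 23\right) }$, and control $\sup_{A}l_{m_{A}}^{-1}\left\vert f\ast H_{m_{A}}^{\left( i\right) }\right\vert $ separately. The whole force of (\ref{new}) is used only for $H^{\left( 1\right) }$. After the Abel transformation carried out in the proof of Theorem \ref{est}, $H_{n}^{\left( 1\right) }=w_{n}\sum_{j}c_{j}\left( n\right) D_{2^{j+1}}$ (plus two boundary terms), with $\left\vert c_{j}\left( n\right) \right\vert \leq \left\vert \varepsilon _{j}\left( n\right) -\varepsilon _{j+1}\left( n\right) \right\vert l_{n\left( j-1\right) }+\varepsilon _{j+1}\left( n\right) \left( l_{n\left( j\right) }-l_{n\left( j-1\right) }\right) $. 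Since $D_{2^{m}}$ is the kernel of the conditional expectation $E_{m}$ and $w_{m}\left( x+t\right) =w_{m}\left( x\right) w_{m}\left( t\right) $, for any Walsh function $w$ one has $\left\vert f\ast \left( wD_{2^{m}}\right) \left( x\right) \right\vert \leq E_{m}\left( x,\left\vert f\right\vert \right) \leq E^{\ast }\left( x,f\right) $. Summing, and using $\sum_{j}\left\vert c_{j}\left( n\right) \right\vert \leq \left\vert n\right\vert V_{L}\left( n\right) +l_{n}$ together with $l_{n}\sim \left\vert n\right\vert $,
\begin{equation*}
\frac{1}{l_{m_{A}}}\left\vert f\ast H_{m_{A}}^{\left( 1\right) }\left( x\right) \right\vert \lesssim \bigl( 1+\sup_{A}V_{L}\left( m_{A}\right) \bigr) E^{\ast }\left( x,f\right) ,
\end{equation*}
which is finite precisely by (\ref{new}).

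The pieces $H^{\left( 21\right) },H^{\left( 22\right) }$ are of the same nature: both are Walsh--modulated combinations of the kernels $D_{2^{j}}$ whose coefficients, after the same telescoping used in (\ref{H22}) and (\ref{H23}), add up to $O\left( \left\vert n\right\vert \right) $, so $l_{m_{A}}^{-1}\left\vert f\ast H_{m_{A}}^{\left( 2i\right) }\right\vert \lesssim E^{\ast }\left( \cdot ,f\right) $ with absolute constants; and $H_{n}^{\left( 3\right) }=l_{n\left( 1\right) }F_{n\left( 1\right) }\prod_{k\geq 2}\rho _{k}^{\varepsilon _{k}\left( n\right) }$ is a bounded--order kernel times a Walsh function, so $l_{m_{A}}^{-1}\left\vert f\ast H_{m_{A}}^{\left( 3\right) }\right\vert \lesssim E^{\ast }\left( \cdot ,f\right) $ as well. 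The genuinely delicate term is $H^{\left( 23\right) }$, which carries the Riesz--type logarithmic kernels $\sum_{k=1}^{2^{j}-1}D_{k}/\left( k+n\left( j-1\right) \right) $. On this term I would again apply Abel's transformation to pass to the Fej\'er kernels $K_{k}$, $k<2^{j}$; after the Walsh twist the convolution becomes $w\left( x\right) \sigma _{k}\left( x,h\right) $, where $\sigma _{k}\left( x,g\right) :=\int g\left( t\right) K_{k}\left( x+t\right) dt$ and $h$ is a function measurable with respect to the $\sigma $--algebra generated by the first $j$ coordinates and satisfying $\left\vert h\right\vert \leq E_{j}\left( \cdot ,\left\vert f\right\vert \right) $; bounding the resulting Fej\'er averages by the maximal Fej\'er operator $\sigma ^{\ast }f:=\sup_{k}\left\vert \sigma _{k}\left( \cdot ,f\right) \right\vert $ (of weak type $(1,1)$ for the Walsh system, by Schipp) and using that the Abel weights again sum telescopically to $O\left( \left\vert n\right\vert \right) $, one obtains $\sup_{A}l_{m_{A}}^{-1}\left\vert f\ast H_{m_{A}}^{\left( 23\right) }\right\vert \lesssim E^{\ast }\left( \cdot ,f\right) +\sigma ^{\ast }\left( \cdot ,f\right) $. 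Altogether $t^{\ast }f\lesssim \bigl( 1+\sup_{A}V_{L}\left( m_{A}\right) \bigr) \bigl( E^{\ast }\left( \cdot ,f\right) +\sigma ^{\ast }\left( \cdot ,f\right) \bigr) $, and since $E^{\ast }$ and $\sigma ^{\ast }$ are of weak type $(1,1)$, so is $t^{\ast }$; this gives (ii) and completes the proof.

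The step I expect to be the main obstacle is exactly the handling of $H^{\left( 23\right) }$. The crude estimate through the maximal Fej\'er \emph{kernel} is useless because that kernel is not integrable; one must exploit that only Fej\'er indices $k<2^{j}$ occur, so that $\sigma _{k}$ is applied to the coarsely--averaged function $h$ rather than to $f$ itself, and that the summation over $j$ telescopes, in order to avoid a spurious factor of size $\left\vert n\right\vert $ which would survive the division by $l_{n}\sim \left\vert n\right\vert $. Should this direct majorization turn out too lossy, the fallback is to keep from Theorem \ref{est} only the weaker conclusion $\sup_{A}\left\Vert F_{m_{A}}\right\Vert _{1}<\infty $, prove $L^{2}$--boundedness of $t^{\ast }$ (e.g.\ by dominating $\left\vert t_{m_{A}}f\right\vert $ by the Walsh--Carleson operator), and pass to weak type $(1,1)$ by a Calder\'on--Zygmund argument; for $f$ supported on $I_{N}$ with $\int f=0$ the quasi--locality input then reduces, via $t_{m_{A}}f=f\ast \left( F_{m_{A}}-E_{N}F_{m_{A}}\right) $ off $I_{N}$, to the kernel smoothness bound $\sup_{N}\int_{\mathbb{I}\setminus I_{N}}\sup_{A}\sup_{v\in I_{N}}\left\vert F_{m_{A}}\left( y+v\right) -F_{m_{A}}\left( y\right) \right\vert dy<\infty $, which is once more read off from the decomposition $l_{n}F_{n}=H_{n}^{\left( 1\right) }+H_{n}^{\left( 2\right) }+H_{n}^{\left( 3\right) }$.
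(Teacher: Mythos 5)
Your overall scheme is the same as the paper's: the decomposition $l_{n}F_{n}=H_{n}^{(1)}+H_{n}^{(2)}+H_{n}^{(3)}$ with $H_{n}^{(2)}=H_{n}^{(21)}+H_{n}^{(22)}+H_{n}^{(23)}$ from the proof of Theorem \ref{est}, the pointwise domination of the $H^{(1)},H^{(21)},H^{(22)}$ pieces by $E^{\ast }(x,f)$ (with condition (\ref{new}) entering only through $H^{(1)}$, exactly as in the paper), and the standard density argument on Walsh polynomials. Those parts of your proposal are correct. The gap is precisely where you anticipated it: the term $H^{(23)}$. Your reduction to the maximal Fej\'er operator does not go through as written. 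After the Abel transformation and the Walsh modulation, the Fej\'er kernel $K_{k}$ with $k<2^{j}$ is applied not to $f$ but to the $j$-dependent function $h_{j}=w\,E_{j}\left( fw\right) $ (a modulated conditional expectation, where the modulating Walsh character itself depends on $j$ and on the bits of $n$ above $j$). Hence what you must control is $\sup_{j}\sup_{k<2^{j}}\left\vert h_{j}\ast K_{k}\right\vert $, and this is \emph{not} pointwise dominated by $\sigma ^{\ast }(\cdot ,f)$, nor by $E^{\ast }(\cdot ,f)$: the bound $\left\vert h_{j}\right\vert \leq E_{j}\left( \left\vert f\right\vert \right) $ gives no pointwise control of $\sigma _{k}$ applied to $h_{j}$, and a supremum over $j$ of the weak-$(1,1)$ operators $f\mapsto \sigma ^{\ast }(h_{j})$ cannot be assembled into a weak-$(1,1)$ bound by summing the coefficients, because summing weak-type estimates over $j$ reintroduces exactly the factor of order $\left\vert n\right\vert $ that the normalization $l_{n}\sim \left\vert n\right\vert $ is supposed to absorb. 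So the asserted inequality $\sup_{A}l_{m_{A}}^{-1}\left\vert f\ast H_{m_{A}}^{(23)}\right\vert \lesssim E^{\ast }(\cdot ,f)+\sigma ^{\ast }(\cdot ,f)$ is unproved, and it is the only genuinely hard estimate in the theorem.

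The paper does not prove this estimate either; it quotes the weak-type $(1,1)$ inequality (\ref{sup23}) for $\sup_{A}\left\vert f\ast H_{m_{A}}^{(23)}\right\vert /\left\vert m_{A}\right\vert $ from G\'at and Goginava \cite{gat-gogi}, where it is established by a genuinely more involved argument than pointwise majorization by classical maximal functions. Your fallback route is also not a repair as stated: the Calder\'on--Zygmund scheme for the \emph{maximal} operator requires the H\"ormander-type condition with $\sup_{A}$ inside the integral, $\int_{\mathbb{I}\setminus I_{N}}\sup_{A}\sup_{v\in I_{N}}\left\vert F_{m_{A}}(y+v)-F_{m_{A}}(y)\right\vert dy\lesssim 1$, which you do not verify and which is at least as delicate as the estimate you are trying to avoid (recall that for the full sequence the logarithmic means do diverge on a set of positive measure for some $f\in L_{1}$, so any such kernel estimate must use (\ref{new}) in an essential way). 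The efficient fix is simply to invoke the known weak-$(1,1)$ bound for the $H^{(23)}$ maximal operator from \cite{gat-gogi}, after which your argument coincides with the paper's proof.
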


\begin{proof}
From (\ref{H1-H3}) we have%
\begin{equation}
f\ast \left( l_{m_{A}}F_{m_{A}}\right) \left( x\right) =f\ast
H_{m_{A}}^{\left( 1\right) }\left( x\right) +f\ast H_{m_{A}}^{\left(
2\right) }\left( x\right) +f\ast H_{m_{A}}^{\left( 3\right) }\left( x\right)
.  \label{f1-f3}
\end{equation}%
It is easy to see that%
\begin{equation}
\left\Vert \sup\limits_{A}\left\vert f\ast H_{m_{A}}^{\left( 3\right)
}\right\vert \right\Vert _{1}\lesssim \left\Vert f\right\Vert _{1}.
\label{f3}
\end{equation}

From (\ref{H21-H23}) we can write%
\begin{equation}
f\ast H_{m_{A}}^{\left( 2\right) }\left( x\right) =f\ast H_{m_{A}}^{\left(
21\right) }\left( x\right) +f\ast H_{m_{A}}^{\left( 22\right) }\left(
x\right) +f\ast H_{m_{A}}^{\left( 22\right) }\left( x\right) .
\label{f21-f23}
\end{equation}%
Using (\ref{dir2}) we have%
\begin{equation*}
\left\vert f\ast H_{m_{A}}^{\left( 21\right) }\left( x\right) \right\vert
\leq \sum\limits_{j=1}^{\left\vert m_{A}\right\vert -1}\left( \left\vert
f\right\vert \ast D_{2^{j}}\left( x\right) \right) \leq \left\vert
m_{A}\right\vert E^{\ast }\left( x,f\right) .
\end{equation*}%
Hence%
\begin{equation}
\sup\limits_{A}\frac{\left\vert f\ast H_{m_{A}}^{\left( 21\right) }\left(
x\right) \right\vert }{\left\vert m_{A}\right\vert }\leq E^{\ast }\left(
x,f\right) .  \label{sup21}
\end{equation}

We can write%
\begin{eqnarray*}
\left\vert f\ast H_{m_{A}}^{\left( 22\right) }\left( x\right) \right\vert 
&\leq &\sum\limits_{j=1}^{\left\vert m_{A}\right\vert -1}\varepsilon
_{j}\left( m_{A}\right) \left( l_{m_{A}\left( j\right) }-l_{m_{A}\left(
j-1\right) }\right) \left( \left\vert f\right\vert \ast D_{2^{j}}\left(
x\right) \right)  \\
&\lesssim &\left\vert m_{A}\right\vert E^{\ast }\left( x,f\right) ,
\end{eqnarray*}%
\begin{equation}
\sup\limits_{A}\frac{\left\vert f\ast H_{m_{A}}^{\left( 22\right) }\left(
x\right) \right\vert }{\left\vert m_{A}\right\vert }\lesssim E^{\ast }\left(
x,f\right) .  \label{sup22}
\end{equation}

It is proved in \cite{gat-gogi}%
\begin{equation}
\sup\limits_{\lambda }\lambda \left\vert \left\{ \sup\limits_{A}\frac{%
\left\vert f\ast H_{m_{A}}^{\left( 23\right) }\right\vert }{\left\vert
m_{A}\right\vert }>\lambda \right\} \right\vert \lesssim \left\Vert
f\right\Vert _{1}.  \label{sup23}
\end{equation}

Since $\sup\limits_{\lambda }\lambda \left\vert \left\{ E^{\ast }\left(
f\right) >\lambda \right\} \right\vert \lesssim \left\Vert f\right\Vert _{1}$
from (\ref{sup21})- (\ref{sup23}) we get%
\begin{equation}
\sup\limits_{\lambda }\lambda \left\vert \left\{ \sup\limits_{A}\frac{%
\left\vert f\ast H_{m_{A}}^{\left( 2\right) }\right\vert }{\left\vert
m_{A}\right\vert }>\lambda \right\} \right\vert \lesssim \left\Vert
f\right\Vert _{1}.  \label{sup2}
\end{equation}

Now, we estimate $\left\vert f\ast H_{m_{A}}^{\left( 1\right) }\left(
x\right) \right\vert $. We have%
\begin{eqnarray*}
\left\vert f\ast H_{m_{A}}^{\left( 1\right) }\left( x\right) \right\vert 
&\lesssim &\sum\limits_{j=1}^{\left\vert m_{A}\right\vert -1}\left(
\varepsilon _{j}\left( m_{A}\right) l_{m_{A}\left( j-1\right) }-\varepsilon
_{j+1}\left( m_{A}\right) l_{m_{A}\left( j\right) }\right) \left( \left\vert
f\right\vert \ast D_{2^{j}}\left( x\right) \right)  \\
&&+l_{\left( \left\vert m_{A}\right\vert -1\right) }\left( \left\vert
f\right\vert \ast D_{2^{\left\vert m_{A}\right\vert +1}}\left( x\right)
\right)  \\
&\lesssim &E^{\ast }\left( x,f\right) \left( l_{\left( \left\vert
m_{A}\right\vert -1\right) }+\sum\limits_{j=1}^{\left\vert m_{A}\right\vert
-1}\left( \varepsilon _{j}\left( m_{A}\right) l_{m_{A}\left( j-1\right)
}-\varepsilon _{j+1}\left( m_{A}\right) l_{m_{A}\left( j\right) }\right)
\right)  \\
&\lesssim &E^{\ast }\left( x,f\right) \left( l_{\left( \left\vert
m_{A}\right\vert -1\right) }+\sum\limits_{j=1}^{\left\vert m_{A}\right\vert
-1}\left\vert \varepsilon _{j}\left( m_{A}\right) -\varepsilon _{j+1}\left(
m_{A}\right) \right\vert l_{m_{A}\left( j-1\right) }\right.  \\
&&\left. +\sum\limits_{j=1}^{\left\vert m_{A}\right\vert -1}\varepsilon
_{j+1}\left( m_{A}\right) \left( l_{m_{A}\left( j\right) }-l_{m_{A}\left(
j-1\right) }\right) \right)  \\
&\lesssim &E^{\ast }\left( x,f\right) \left( l_{\left( \left\vert
m_{A}\right\vert -1\right) }+\sum\limits_{j=1}^{\left\vert m_{A}\right\vert
-1}\left\vert \varepsilon _{j}\left( m_{A}\right) -\varepsilon _{j+1}\left(
m_{A}\right) \right\vert l_{m_{A}\left( j-1\right) }\right) .
\end{eqnarray*}%
From the condition of the Theorem we can write%
\begin{equation*}
\sup\limits_{A}\frac{\left\vert f\ast H_{m_{A}}^{\left( 1\right) }\left(
x\right) \right\vert }{l_{m_{A}}}\lesssim E^{\ast }\left( x,f\right)
V_{L}\left( m_{A}\right) 
\end{equation*}%
and%
\begin{equation}
\sup\limits_{\lambda }\lambda \left\vert \left\{ \sup\limits_{A}\frac{%
\left\vert f\ast H_{m_{A}}^{\left( 1\right) }\right\vert }{\left\vert
m_{A}\right\vert }>\lambda \right\} \right\vert \lesssim \left\Vert
f\right\Vert _{1}.  \label{sup1}
\end{equation}

Combining (\ref{H21-H23}), (\ref{f3}), (\ref{sup2}) and (\ref{sup1}) we
conclude that 
\begin{equation*}
\sup\limits_{\lambda }\lambda \left\vert \left\{ \sup\limits_{A}\frac{%
\left\vert f\ast F_{m_{A}}\right\vert }{\left\vert m_{A}\right\vert }%
>\lambda \right\} \right\vert \lesssim \left\Vert f\right\Vert _{1}.
\end{equation*}

By the well-known density argument we complete the proof of Theorem \ref%
{a.e.}.
\end{proof}

\section{Uniform and $L$-Convergence of Logarithmic Means}

Denote by $C_{w}\left( \mathbb{I}\right) $ the space of uniformly continuous
functions on $\mathbb{I}$, with the supremum norm 
\begin{equation*}
\left\Vert f\right\Vert _{C_{w}}:=\sup\limits_{x\in \mathbb{I}}\left\vert
f\left( x\right) \right\vert \text{ \ \ \ \ }\left( f\in C_{w}\left( \mathbb{%
I}\right) \right) .
\end{equation*}%
Let $X=X\left( \mathbb{I}\right) $ be either the space $L_{1}\left( \mathbb{I%
}\right) $, or the space of uniformly continuous functions, that is, $%
C_{w}\left( \mathbb{I}\right) $. The corresponding norm is denoted by $%
\left\Vert \cdot \right\Vert _{X}$. The modulus of continuity, when $X=C$,
and the integrated modulus of continuity, when $X=L_{1}$ is defined by%
\begin{equation*}
\omega \left( \delta ,f\right) :=\sup\limits_{0<h\leq \delta }\left\Vert
f\left( \cdot +h\right) -f\left( \cdot \right) \right\Vert _{X}.
\end{equation*}

If $\omega \left( \delta \right) $ is a modulus of continuity, then $%
H_{X}^{\omega }$ denotes the class of functions $f\in X\left( \mathbb{I}%
\right) $ for which $\omega \left( \delta ,f\right) =O\left( \omega \left(
\delta \right) \right) $ as $\delta \rightarrow 0+$.

For Walsh-Fourier series Fine \cite{fine} has obtained a sufficient
condition for the uniform convergence which is in a complete analogy with
the Dini-Lipshitz condition (see also \cite{sws}). Similar results are true
for the space of integrable functions $L_{1}\left( \mathbb{I}\right) $ \cite%
{onn}. Gulicev \cite{gul} has estimated the rate of uniform convergence of a
Walsh-Fourier series using Lebesgue constant and modulus of continuity.
Uniform convergence of Walsh-Fourier series of the functions of classes of
generalized bounded variation was investigated by author \cite{gogAMH}. This
problem has been considered for Vilenkin group by Fridli \cite{fr} and G\'at 
\cite{gat2}. Lukomskii \cite{luk1} considered uniform and $L_{1}$%
-convergence of subsequence of partial sums of Walsh-Fourier series. In
particular, he proved that the condition $\sup\limits_{A}V_{S}\left(
m_{A}\right) <\infty $ is necessary and sufficient condition for uniform and 
$L_{1}$-convergence of subsequence of partial sums $S_{m_{A}}\left( f\right) 
$ of Walsh-Fourier series. In Moricz and Siddiqi \cite{mor} investigated
approximation properties of Nörlund means $\frac{1}{Q_{n}}%
\sum\limits_{k=0}^{n-1}q_{n-k}S_{k}f$. The case when we have $q_{k}:=1/k$
differs from the types discussed by Moricz and Siddiqi in \cite{mor}. His
method is not applicable for logarithmic means. In \cite{gat-gogi2} it is
proved that Theorem of Moricz does not hold for $L_{1},$ $C_{w}$ and $%
q_{k}:=1/k.$ In particular, the following is proved.

\begin{GT1}
The following conditions are equivalent:\newline
\newline
a) \ \ $\omega \left( \delta \right) =o\left( \frac{1}{\log \left( 1/\delta
\right) }\right) ;$\newline
\newline
b) Nörlund logarithmic means of Walsh-Fourier series for all functions from
the $H_{X}^{\omega }$ converge in $X$-norm;\newline
\newline
c) partial sums of Walsh-Fourier series for all functions from the $%
H_{X}^{\omega }$ converge in $X$-norm;.
\end{GT1}

In \cite{gog-tk} it is investigated $X$-norm convergence of subsequence of
logarithmic means of Walsh-Fourier series. In particular the following are
proved.

\begin{GT2}
a) Let $f\in X\left( \mathbb{I}\right) $ and%
\begin{equation}
\sup\limits_{A}\frac{\log \left( m_{A}-2^{\left\vert m_{A}\right\vert
}+1\right) }{\log m_{A}}\left\Vert \frac{1}{l_{m_{A}-2^{\left\vert
m_{A}\right\vert }}}\sum\limits_{j=1}^{m_{A}-2^{\left\vert m_{A}\right\vert
}-1}\frac{D_{n-j}}{j}\right\Vert _{1}<\infty .  \label{conszeged}
\end{equation}%
Then subsequence of Nörlund logarithmic means $L_{m_{A}}\left( f\right) $
converges in the norm of space $X\left( \mathbb{I}\right) $.\newline
\newline
b) If the condition (\ref{conszeged}) does not holds then we can find a
function from the space $X\left( \mathbb{I}\right) $ for which the
convergence of logarithmic means $L_{m_{A}}\left( f\right) $ in the norm of
space $X\left( \mathbb{I}\right) $ does not holds.
\end{GT2}

Since 
\begin{eqnarray*}
&&\sup\limits_{A}\frac{\log \left( m_{A}-2^{\left\vert m_{A}\right\vert
}+1\right) }{\log m_{A}}\left\Vert \frac{1}{l_{m_{A}-2^{\left\vert
m_{A}\right\vert }}}\sum\limits_{j=1}^{m_{A}-2^{\left\vert m_{A}\right\vert
}-1}\frac{D_{n-j}}{j}\right\Vert _{1} \\
&\sim &\sup\limits_{A}\frac{\log \left( m_{A}-2^{\left\vert m_{A}\right\vert
}+1\right) }{\log m_{A}}\frac{1}{\left\vert m_{A}-2^{\left\vert
m_{A}\right\vert }\right\vert }\sum\limits_{k=1}^{\left\vert
m_{A}-2^{\left\vert m_{A}\right\vert }\right\vert }\left\vert \varepsilon
_{k}\left( n\right) -\varepsilon _{k+1}\left( n\right) \right\vert
l_{n\left( k-1\right) } \\
&\sim &\sup\limits_{A}\frac{1}{\left\vert m_{A}\right\vert }%
\sum\limits_{k=1}^{\left\vert m_{A}\right\vert }\left\vert \varepsilon
_{k}\left( n\right) -\varepsilon _{k+1}\left( n\right) \right\vert
l_{n\left( k-1\right) },
\end{eqnarray*}

from Theorem GT we can formulate necessary and sufficint condition for norm
convergence of subsequence of Nörlund logarithmic means.

\begin{theorem}
Let $f\in X\left( \mathbb{I}\right) $. Then the condition $%
\sup\limits_{A}V_{L}\left( m_{A}\right) <\infty $ is neccessary and
sufficient for convergence subsequence of Nörlund logarithmic means of
Walsh-Fourier series in norm of space $X\left( \mathbb{I}\right) .$
\end{theorem}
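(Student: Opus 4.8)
The plan is to deduce this final theorem directly from Theorem GT and the $L_1$-estimate of Theorem~\ref{est}, which together already contain all the analytic content; what remains is a purely arithmetic comparison of the two quantities appearing in condition~(\ref{conszeged}) and in $V_L(m_A)$. By Theorem GT, norm convergence of $L_{m_A}(f)$ for all $f\in X(\mathbb{I})$ is equivalent to
\begin{equation*}
\sup_A \frac{\log\left(m_A-2^{|m_A|}+1\right)}{\log m_A}\left\Vert \frac{1}{l_{m_A-2^{|m_A|}}}\sum_{j=1}^{m_A-2^{|m_A|}-1}\frac{D_{n-j}}{j}\right\Vert_1<\infty,
\end{equation*}
so it suffices to show this supremum is comparable to $\sup_A V_L(m_A)$. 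First I would apply Theorem~\ref{est} to the integer $m_A-2^{|m_A|}$ (i.e. $m_A$ with its leading bit removed), which replaces the $L_1$-norm by $\frac{1}{|m_A-2^{|m_A|}|}\sum_{k}|\varepsilon_k-\varepsilon_{k+1}|\,l_{\cdot(k-1)}$; this is the first displayed $\sim$ in the excerpt's computation.

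The second step is the arithmetic comparison
\begin{equation*}
\frac{\log\left(m_A-2^{|m_A|}+1\right)}{\log m_A}\cdot\frac{1}{|m_A-2^{|m_A|}|}\sum_{k=1}^{|m_A-2^{|m_A|}|}|\varepsilon_k-\varepsilon_{k+1}|\,l_{n(k-1)}\ \sim\ \frac{1}{|m_A|}\sum_{k=1}^{|m_A|}|\varepsilon_k-\varepsilon_{k+1}|\,l_{n(k-1)},
\end{equation*}
which is the heart of the matter. Here one uses that $\log m_A\sim |m_A|$ and $\log(m_A-2^{|m_A|}+1)\sim |m_A-2^{|m_A|}|$ (up to the trivial case where $m_A$ is a power of two plus a bounded tail, which must be handled separately and contributes only a bounded term), so the two prefactors $\frac{\log(\cdot)}{\log m_A}\cdot\frac{1}{|\cdot|}$ and $\frac{1}{|m_A|}$ are each $\sim \frac{1}{|m_A|}$; and that removing the top bit changes neither the summands $|\varepsilon_k(n)-\varepsilon_{k+1}(n)|\,l_{n(k-1)}$ for $k\le |m_A|-2$ nor, up to a bounded additive error absorbed by the $c|n|$ terms already present in Theorem~\ref{est}, the value of the full sum up to $k=|m_A|$. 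I would spell out these two observations carefully, paying attention to the boundary index $k=|m_A|$ and to the degenerate case $\varepsilon_k(m_A)\equiv 1$ for a long run ending at the top, so that the "$+1$" and the truncation do not distort the logarithmic variation.

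Combining the two steps gives $\sup_A V_L(m_A)<\infty$ iff condition~(\ref{conszeged}) holds, and then Theorem GT(a) yields convergence in $X$-norm while Theorem GT(b) yields a counterexample when it fails, establishing both necessity and sufficiency. The main obstacle I anticipate is not conceptual but bookkeeping: ensuring the chain of $\sim$'s is uniform in $A$, in particular that the exceptional $A$ for which $m_A-2^{|m_A|}$ is very small (so that $\log(m_A-2^{|m_A|}+1)$ and $|m_A-2^{|m_A|}|$ are not comparable) are exactly those for which both $V_L(m_A)$ and the left-hand supremum are bounded anyway, so they can be discarded without loss. Once that case analysis is pinned down the rest is a direct substitution.
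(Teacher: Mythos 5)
Your proposal follows essentially the same route as the paper: the paper likewise deduces the theorem by combining Theorem GT with Theorem~\ref{est} applied to $m_A-2^{\left\vert m_A\right\vert}$ and then making exactly the arithmetic comparison of the prefactors and of the truncated versus full logarithmic-variation sums. If anything, your attention to the degenerate cases (small $m_A-2^{\left\vert m_A\right\vert}$ and the boundary index $k=\left\vert m_A\right\vert$) is more explicit than the paper's own two-line chain of $\sim$'s.
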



\end{document}